\newif\ifdraft\draftfalse
\newif\ifcite\citefalse
\newif\ifblow\blowtrue
\ifcite\usepackage{showkeys}\else\usepackage[notcite,notref]{showkeys}\fi\fi
\DeclarePairedDelimiter\floor{\lfloor}{\rfloor}
\newtheorem{proposition}[equation]{Proposition}
\newtheorem{theorem}[equation]{Theorem}
\theoremstyle{definition}
\theoremstyle{remark}
\newtheorem{remark}[equation]{Remark}
\newtheorem{question}[equation]{Question}
\newtheorem{example}[equation]{Example}
\numberwithin{equation}{section}
\newcommand\lp{\left(}
\newcommand\rp{\right)}
\def\bc{\begin{cases}}
\def\ec{\end{cases}}
\def\ol{\overline}
\def\t{\tilde}
\def\ba{{\mathbb A}}
\def\bc{{\mathbb C}}
\def\bh{{\mathbb H}}
\def\bn{{\mathbb N}}
\def\bq{{\mathbb Q}}
\def\bz{{\mathbb Z}}
\def\er{\eqref}
\def\bz{\mathbb Z}
\def\bq{\mathbb Q}
\def\bc{\mathbb C}
\def\bh{\mathbb H}
\def\bean{\begin{eqnarray}}
\def\eean{\end{eqnarray}}
\def\bea{\begin{eqnarray*}}
\def\eea{\end{eqnarray*}}
\def\beq{\begin{equation}}
\def\eeq{\end{equation}}
\def\bal{\begin{align*}}
\def\eal{\end{align*}}
\def\baln{\begin{align}}
\def\ealn{\end{align}}
\def\beg{\begin{gather*}}
\def\eng{\end{gather*}}
\def\bqu{\begin{question}}
\def\equ{\end{question}}
\def\implies{\Longrightarrow}
\def\ban{\begin{proof}[Answer]}
\def\ean{\end{proof}}
\def\on{\operatorname}
\def\bqu{\begin{question}}
\def\equ{\end{question}}
\def\0110{\begin{matrix} 0 & 1\\1&0\end{matrix}}
\def\t{\tilde}
\def\ban{\begin{proof}[Answer]}
\def\ean{\end{proof}}
\def\wt{\widetilde}
\def\ben{\begin{equation}}
\def\een{\end{equation}}
\def\j1{{(j+1)}}
\def\f32{{}_3F_2}
\newcommand{\zn}{{\mathbb Z}/N{\mathbb Z}}
\begin{document}

\title
{Cusps and boundaries of connected fundamental domains for $\Gamma_0(N)$}

\author{Zhaohu Nie}
\email{zhaohu.nie@usu.edu}
\address{Department of Mathematics and Statistics, Utah State University, Logan, UT 84322-3900, USA}

\subjclass[2020]{11F06, 20H05}

\begin{abstract} 
For $N>1$, we constructed a canonical connected fundamental domain for $\Gamma_0(N)$ in \cite{NP}, utilizing an interesting function $W: \bz/N\to \bn$. In this paper, we further study the function $W$, prove some identities, and use it to match the cusps, with widths, produced by our connected fundamental domain with the known cusp classes of $\Gamma_0(N)$. Furthermore, we list the boundary arcs and the gluing patterns of our connected fundamental domain, a key step in understanding the modular curve $X_0(N)$ by this approach. 
\end{abstract}

\maketitle

\section{Introduction}
The motivation for this paper came from the previous work \cite{NP}, in which we produced canonical connected fundamental domains for the congruence subgroups. Let $N>1$ and let us concentrate on the heart of that work, the $\Gamma_0(N)$ case. 

The connected fundamental domain produces some natural cusps with their own widths, specified by a function $W$ (see \er{def w}). The cusps produced this way are not inequivalent to each other. It was one motivation of this paper to classify these cusps by their equivalence classes and to reconcile the corresponding widths. 
The particular case of $N=30$ was worked out in \cite{NP}*{Example 3.3}, and here we aim to study the case for a general $N$ and to prove the corresponding identities. 

In the author's point of view, the advantage of a connected fundamental domain is the convenience it provides in understanding and getting a feel for the modular curve $X_0(N)$. As opposed to a disconnected fundamental domain consisting of ideal geodesic triangles in $\bh$, the common edges for a connected one are already identified, giving a clearer picture.  The remaining task is to specify how the boundary arcs are identified by elements of $\Gamma_0(N)$. Another goal of this paper is to write out the list of boundary arcs and the gluing patterns. This, a priori, seems a hopeless task for a general $N$, but the final result turns out to be rather neat. A key role is played by the geometry of the projective line $P^1(\bz/N)$ (see \er{P1}). 

After the motivation, let's describe these two results, about the cusps and boundaries of the connected fundamental domains, in more details.

A key tool in our construction \cite{NP} is a function 
\beq\label{def w}
W:\bz/N\to \bn;\quad j\mapsto W_j = \min\{m\in \bn\,|\, mj-1\in (\bz/N)^*\},
\eeq
where $\bn=\bz_{>0}$ and $(\bz/N)^*$ is the group of units. The related function
\beq\label{def m}
M: \bz/N\to \bz_{\geq 0};\quad j\mapsto M_j := W_j-1
\eeq
is actually first studied and more fundamental in our work \cite{NP}. 

Let 
$$S = \begin{pmatrix}
0 & -1\\
1 & 0
\end{pmatrix},
\quad
T = \begin{pmatrix}
1 & 1 \\
0 & 1
\end{pmatrix}
$$
be the generators of $\Gamma(1)=SL_2(\bz)$. 
Let  $A$ be a set in $\bz$ of consecutive residue class representatives for $\bz/N$, such as $\{0,1,\dots, N-1\}$, or 
\begin{equation}\label{sym chc}
\{-N_1,\dots,N_2\}, \quad N_1=\floor{\frac{N-1}{2}},\ N_2=\floor{\frac{N}{2}}, 
\end{equation}
where $\floor{\cdot}$ is the usual floor function. 
    For $x\in \bz $, we define $\wt{x}$ to be the unique integer such that $\wt{x}\equiv x \mod N$ and $-N_1\leq \wt{x}\leq N_2$.

The main result of \cite{NP} is that 
\beq\label{list}
\Theta = \{ST^i\,|\, i\in A\} \cup \{ST^jST^m\,|\, j\in A,\ \gcd(j, N)> 1,\ 0\leq m\leq M_j\}
\eeq 
is a set of right coset representatives for $\Gamma_0(N)\backslash \Gamma(1)$, which gives a connected fundamental domain for $\Gamma_0(N)$. 

\begin{figure}[ht]\label{fig 1}
\begin{center}
\includegraphics[scale=0.7]{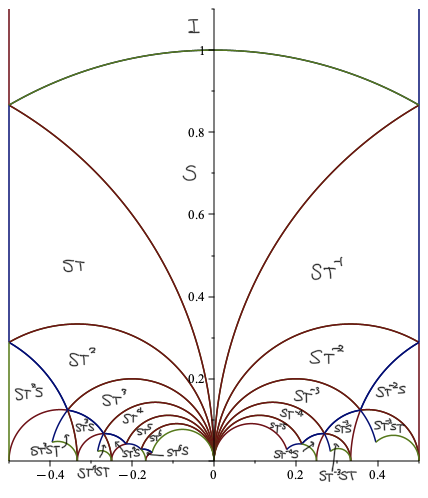}
\caption{Our Connected Fundamental Domain for $\Gamma_0(12)$}
\end{center}
\end{figure}

Figure \ref{fig 1} is the picture of the connected fundamental domain for $\Gamma_0(12)$, where the labels are at the corresponding images of the standard fundamental domain 
\begin{equation}\label{first D}
D = \Big\{z\in \bh\,\Big|\, |z|>1, |\on{Re}z|<\frac{1}{2}\Big\}.
\end{equation}
for $\Gamma(1)$ acting on the upper half plane $\bh$.

The author finds the function $W$ in \er{def w} interesting, and first establishes some identities for it. 

\begin{proposition}\label{wid}
We have 
\begin{align}
\sum_{j\in \bz/N} W_j &= \psi(N):=\prod_{p|N} \lp 1+\frac{1}{p}\rp\label{total}\\
\sum_{j\in (\bz/N)^*} W_j &= N,\label{1i}\\
\sum_{\gcd(j, N)>1} W_j &= \psi(N) - N.\label{jm}
\end{align}
\end{proposition}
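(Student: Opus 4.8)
The three identities are linked: since $\bz/N = (\bz/N)^* \sqcup \{j : \gcd(j,N) > 1\}$, identity \eqref{total} is just the sum of \eqref{1i} and \eqref{jm}, so it is enough to prove the latter two. Of these, \eqref{jm} is a counting consequence of the main theorem of \cite{NP}: by that result the $N + \sum_{j \in A,\ \gcd(j,N)>1} W_j$ matrices displayed in \eqref{list} are pairwise distinct and form a complete system of right coset representatives for $\Gamma_0(N)\backslash\Gamma(1)$, so their number equals the index $[\Gamma(1):\Gamma_0(N)] = \psi(N)$. Here one uses $|A| = N$ together with the fact that for each $j \in A$ with $\gcd(j,N)>1$ there are exactly $M_j + 1 = W_j$ admissible values $0 \le m \le M_j$; since both $\gcd(j,N)>1$ and $W_j$ depend only on $j \bmod N$, the total count is $N + \sum_{\gcd(j,N)>1} W_j$, and equating the two expressions gives \eqref{jm}.

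The substantive identity is \eqref{1i}, and the idea I would use is to pass to inverses. For a unit $j$, let $k \in \{1, 2, \dots, N-1\}$ be the representative of $j^{-1}$ (a unit is never $\equiv 0 \bmod N$). Multiplication by the unit $j^{-1}$ permutes $\bz/N$ and preserves $(\bz/N)^*$, so $mj - 1$ is a unit if and only if $j^{-1}(mj - 1) \equiv m - k$ is one, i.e. if and only if $\gcd(m - k, N) = 1$. Therefore
\[
W_j = \min\{m \in \bn \mid \gcd(m - k, N) = 1\} = k - d(k), \qquad d(k) := \max\{d \in \bz \mid d < k,\ \gcd(d, N) = 1\}.
\]
Indeed, $d(k)$ exists because the set defining it is bounded above by $k$ and contains $-1$; for $1 \le m < k - d(k)$ the integer $k - m$ lies strictly between $d(k)$ and $k$, hence is not coprime to $N$ by maximality of $d(k)$, whereas $m = k - d(k)$ makes $m - k = -d(k)$ coprime to $N$, and $k - d(k) \ge 1$.

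Since $j \mapsto j^{-1}$ is a bijection of $(\bz/N)^*$,
\[
\sum_{j \in (\bz/N)^*} W_j = \sum_{k} \bigl(k - d(k)\bigr),
\]
the right-hand sum running over the units $k$ in $\{1, \dots, N-1\}$. List these in increasing order as $1 = u_1 < u_2 < \cdots < u_{\phi(N)} = N-1$. For $i \ge 2$ the integers $< u_i$ coprime to $N$ are exactly $u_1, \dots, u_{i-1}$, so $d(u_i) = u_{i-1}$, while $d(u_1) = -1$ since $\gcd(0,N) = N > 1$. Putting $u_0 := -1$, the sum telescopes:
\[
\sum_{j \in (\bz/N)^*} W_j = \sum_{i=1}^{\phi(N)} \bigl(u_i - u_{i-1}\bigr) = u_{\phi(N)} - u_0 = (N-1) - (-1) = N,
\]
which is \eqref{1i}; then \eqref{total} follows by adding \eqref{1i} and \eqref{jm}.

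The single point that requires an idea is the inverse substitution that recasts $W_j$, for a unit $j$, as the length of the gap in $(\bz/N)^*$ immediately below $j^{-1}$; after that the telescoping is automatic and \eqref{jm} is bookkeeping over \cite{NP}. One could instead attack \eqref{total} head-on by analyzing $W_j$ for non-units — there $W_j$ turns into a Jacobsthal-type minimum over the primes dividing $N$ but not $\gcd(j,N)$ — but that is messier, so I would rather import \eqref{jm} from \cite{NP} as above.
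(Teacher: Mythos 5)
Your proof is correct and essentially matches the paper's: the paper likewise obtains the counting identity from the main result of \cite{NP} (via the bijection $\on{pr}:P^1(\bz/N)\to\Phi$, which is just another packaging of your count of the coset representatives $\Theta$), and its ``direct proof'' of \er{1i} is exactly your inverse-substitution-plus-telescoping argument, written as $W_{u_i^{-1}}=\Delta u_i$ for consecutive units. The only (immaterial) differences are which of the three identities is deduced from the other two, and that your argument implicitly uses the standard $\psi(N)=N\prod_{p|N}(1+1/p)$, which is what the statement's $\psi(N)$ must mean (the displayed definition omits the factor $N$).
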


 $\Gamma_0(N)$ acts on $P^1(\bq)=\bq\cup \infty$ by M\"obius transformations, and the quotient set is the set  $C_0(N)$ of cusp classes of $\Gamma_0(N)$.
The  cusp class of $s\in P^1(\bq)$ is denoted by  
$$
[s] =\Gamma_0(N)\cdot s\in C_0(\Gamma).
$$
An element $s=a/c\in P^1(\bq)$ with $a, c\in \bz$ and $\gcd(a, c)=1$ is called \emph{reduced}. By convention, $\infty=\pm 1/0$. 
 In this paper, we will only work with {reduced} elements.

In this paper, 
we use $d|N$ to denote that $d$ is a positive integer divisor of $N$. Then 
\beq\label{the ds}
\text{for }d|N,\quad d' := N/d,\quad d'':=\gcd(d, d'),\quad \tilde d := d'/d''.
\eeq


\begin{proposition}\label{cusps}
Let $N>1$. The number of cusp classes for $\Gamma_0(N)$ is 
$$\epsilon_\infty(\Gamma_0(N)) = \sum_{d|N} \phi(\gcd(d, N/d)) = \sum_{d|N} \phi(d''),$$
where  $\phi$ is the Euler totient function. 
Furthermore, with
$$S_0(N):=\bigcup_{d|N}(\bz/d'')^*,
$$
we have a bijection 
\beq\label{bij0}
\chi: C_0(N)\to S_0(N);\quad [a/c] \mapsto (d=\gcd(c, N); \pi_{d''}(a\cdot c/d)), 
\eeq
where $\pi_{d''}:\bz\to \bz/d''$ is the natural homomorphism. 
The width is 
$$
\on{wd}([a/c]) = \t d = d'/d''.
$$
\end{proposition}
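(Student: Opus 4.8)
The plan is to reduce everything to the classical description of cusps of $\Gamma_0(N)$. Recall that two reduced fractions $a/c$ and $a'/c'$ are $\Gamma_0(N)$-equivalent if and only if there exist $s, s'$ with $ss'\equiv 1 \pmod{\gcd(c,N)}$ such that $c'\equiv sc$ and $a'\equiv sa \pmod{\gcd(c, N/\gcd(c,N))}$ — this is the standard lemma (Shimura, or Diamond–Shimura). In particular the invariant $d=\gcd(c,N)$ depends only on the class $[a/c]$, so there is a well-defined map $C_0(N)\to \{d\mid N\}$, and the fiber over $d$ is governed by the residue of $a$ (equivalently $a\cdot c/d$, after clearing the unit $c/d$) modulo $d''=\gcd(d, N/d)$. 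First I would set up notation: fix $d\mid N$, write $c = d\cdot u$ with $\gcd(u, N/d)=1$; then $u\in (\bz/(N/d))^*$ but its class matters only modulo $d''$ after scaling. The claim is that $[a/c]\mapsto (d,\ \pi_{d''}(a\cdot c/d))$ is well-defined and injective into $\bigsqcup_{d\mid N}(\bz/d'')^*$, and surjective. Well-definedness and injectivity are exactly the equivalence criterion above transcribed; surjectivity requires producing, for each $d\mid N$ and each unit $\bar r\in(\bz/d'')^*$, a reduced fraction $a/c$ with $\gcd(c,N)=d$ and $a c/d\equiv \bar r$, which is an elementary CRT / Dirichlet-type lifting argument (choose $c=d$ when $\gcd(d,N/d)$ allows, otherwise adjust by a prime to unit status). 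The count $\epsilon_\infty = \sum_{d\mid N}\phi(d'')$ then follows immediately from $|S_0(N)| = \sum_{d\mid N}\phi(d'')$, and this matches the known formula.

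For the width, I would use that the width of the cusp $[a/c]$ is $N/(\gcd(c^2, N))$ — the standard formula — and then show $\gcd(c^2,N) = d\cdot d''$ where $d=\gcd(c,N)$, $d''=\gcd(d,N/d)$, so that $\on{wd}([a/c]) = N/(d\,d'') = d'/d'' = \tilde d$. The identity $\gcd(c^2, N) = d\,d''$ is a local (prime-by-prime) check: write $v_p(N)=e$, $v_p(c)=f$; then $v_p(d)=\min(f,e)$, $v_p(d')=e-\min(f,e)$, $v_p(d'')=\min(\min(f,e), e-\min(f,e))$, and $v_p(c^2)=2f$, and one verifies $\min(2f,e) = \min(f,e) + \min(\min(f,e),\,e-\min(f,e))$ by splitting into the cases $f\ge e$, $e/2\le f< e$, and $f< e/2$. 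This is routine.

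The main obstacle — or at least the part demanding the most care — is verifying well-definedness and injectivity of $\chi$ simultaneously, i.e. checking that the equivalence relation on reduced fractions induced by $\Gamma_0(N)$ corresponds \emph{exactly} to equality of the pair $(d, \pi_{d''}(ac/d))$, with no finer or coarser identification. The subtlety is that the classical criterion involves a congruence modulo $\gcd(d, N/d)$ on $a$ together with a compatible scaling on $c$, and one must confirm that the scalar $s$ is genuinely free to range over all of $(\bz/d'')^*$ (so the invariant is $ac/d$ up to nothing further) while being forced to preserve $\gcd(c,N)=d$. I would handle this by carefully tracking, for a prospective equivalence $a/c \sim a'/c'$, how $s$ is constrained: $s$ must be a unit mod $d''$, and conversely any unit mod $d''$ lifts to an admissible $s$; then $a'c'/d \equiv s^2 (ac/d) \cdot (\text{unit})$ reduces, after absorbing squares of units and the fixed scaling, to $a'c'/d\equiv ac/d \pmod{d''}$. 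Once this bookkeeping is pinned down, the rest of the proposition assembles mechanically.
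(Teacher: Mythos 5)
The paper offers no proof of this proposition at all --- it explicitly defers to standard references (Diamond--Shurman \S 3.8, Shimura \S 1.6, Cremona \S 2.2, Stein \S 1.4) --- so there is no argument of the author's to compare against; your proposal supplies essentially the verification those references support, and it is sound as a plan. The width computation is complete and correct: the valuation identity $\min(2f,e)=\min(f,e)+\min(\min(f,e),\,e-\min(f,e))$ checks out in all three cases, giving $\gcd(c^2,N)=d\,d''$ and hence $\on{wd}=N/(d\,d'')=d'/d''=\t d$. Two points deserve tightening. First, the equivalence criterion as you quote it is garbled (the moduli attached to the congruences on $c'$ and on $a'$ are not right as written); the clean reference form is Cremona's: $a_1/c_1\sim a_2/c_2$ iff $s_1c_2\equiv s_2c_1 \pmod{\gcd(c_1c_2,N)}$ where $a_js_j\equiv 1\pmod{c_j}$. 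Second, the well-definedness step, which you route through ``$a'c'/d\equiv s^2(ac/d)\cdot(\text{unit})$'' and an absorption of squares, is the fuzziest part of the sketch; it is cleaner to verify invariance directly. If
$$\gamma=\begin{pmatrix}\alpha&\beta\\ N\gamma_0&\delta\end{pmatrix}\in\Gamma_0(N)$$
carries the reduced fraction $a/c$ to $a'/c'$, then $(a',c')=\pm(\alpha a+\beta c,\ N\gamma_0 a+\delta c)$ is again reduced, so $\gcd(c',N)=\gcd(\delta c,N)=d$, and reducing modulo $d''$ using $d''\mid d\mid c$ and $d''\mid N/d$ gives $a'\equiv\pm\alpha a$ and $c'/d\equiv\pm\delta\,(c/d)$, whence $a'c'/d\equiv\alpha\delta\,(ac/d)\equiv ac/d\pmod{d''}$. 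With that in hand, injectivity is the genuinely nontrivial direction (constructing $\gamma$ from equality of invariants), and your reduction of it to the standard criterion, together with the CRT argument for surjectivity (where in fact $c=d$ always works for a suitable $a\equiv r\pmod{d''}$ coprime to $d$), completes the count $\epsilon_\infty=\sum_{d\mid N}\phi(d'')$.
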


This proposition can be verified using standard resources, such as \cite{DS}*{\S 3.8}, \cite{Shimura}*{\S 1.6}, \cite{Crem}*{\S 2.2} and \cite{S}*{\S 1.4}. We will not provide a detailed proof. 

The representatives $ST^i$ in \er{list} produce cusp 
$$
ST^i(\infty) = S(\infty) = 0.
$$
This in reduced form is $0/1$, and corresponds to $d=1$ in \er{bij0}. The width is $\t d = d'/d'' = N$, and this corresponds to that $i$ runs through the representatives for $\bz/N$ in \er{list}. 
This is the trivial part of our identification goal for cusps. 

On the other hand, the representatives $ST^jST^m$ in \er{list} produces cusps 
\beq\label{the cusp}
ST^jST^m(\infty) = \begin{pmatrix}
-1 & -m \\ j & mj-1 
\end{pmatrix}(\infty) = -\frac{1}{j}.
\eeq
Here $\gcd(j, N)>1$, and $0\leq m\leq M_j$. So the cusp $-1/j$ has a natural width $W_j$, in view of \er{def m}. 

If we let $j$ run through the residue class representatives $A=\{0,-1,\dots,-(N-1)\}$ of $\bz/N$, then 
$$
\{1/j \text{ with width }W_j\,|\, 0\leq j< N,\ \gcd(j, N)>1\}
$$ 
are natural cusps of  $\Gamma_0(N)$, produced by the work \cite{NP}. 

Now we achieve the goal of identifying these with the cusp classes in Proposition \ref{cusps}, and obtain the identity for widths. 

By \er{bij0}, we have
\beq\label{the class}
\chi([1/j])= (d; \pi_{d''}(j/d)),\quad \text{where }d=\gcd(j, N)>1.
\eeq
Going the other way, with $d>1$, $d|N$ and $b\in (\bz/d'')^*$, for $\chi([1/j])= (d; b)$, we need 
$$
d|j,\ j/d\in (\bz/d')^*,\ \pi^{*d'}_{d''} (j/d)=b,
$$
where $\pi^{*d'}_{d''}: (\bz/d')^*\to (\bz/d'')^*$ is the natural homomorphism. 
Therefore, we have  
\beq\label{kb}
j=dk,\quad k\in K_b := (\pi^{*d'}_{d''})^{-1}(b).
\eeq
where the notation $k$ is abused to also denote its integer lift for $\pi_{d'}: \bz\to \bz/d'$ between 1 and $d'$. 

We have the following result to relate the widths. 
\begin{theorem}\label{wdj} Let $N>1$, $d>1$ and $d|N$. Then the width of the cusp class for $\Gamma_0(N)$ represented by $(d; b\in (\bz/d'')^*)$ is the sum of the widths of all the $1/j$ such that $\chi([1/j])=(d;b)$, that is,  
\beq\label{hardest}
\t d=\frac{d'}{d''}=\sum_{k\in K_b} W_{dk}.
\eeq
\end{theorem}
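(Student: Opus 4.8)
The plan is to reduce the evaluation of each $W_{dk}$ to a $W$-function for a strictly smaller modulus, for which \eqref{1i} of Proposition~\ref{wid} applies, and then to sum. Put
\[
n:=\prod_{\substack{p\mid N\\ p\nmid d}}p,
\]
the radical of the part of $N$ coprime to $d$. Each such prime divides $d'=N/d$, so $n\mid d'$; and since $k\in K_b\subseteq(\bz/d')^*$ we have $p\nmid k$, so $dk$ is a unit modulo $n$. Unwinding \eqref{def w}, $W_{dk}=\min\{m\ge 1:\ p\nmid mdk-1\text{ for every }p\mid N\}$, and for $p\mid d$ this is automatic; hence only the primes dividing $n$ impose conditions and
\[
W_{dk}=\min\{m\ge 1:\ mdk-1\in(\bz/n)^*\}=W^{(n)}_{\,dk\bmod n},
\]
where $W^{(n)}$ is the function \eqref{def w} built from $n$ in place of $N$. (If $n=1$, i.e.\ every prime of $N$ divides $d$, then $W_{dk}=1$ for all $k$ and \eqref{hardest} reduces to $\phi(d')/\phi(d'')=d'/d''$, true because then $\operatorname{rad}(d')=\operatorname{rad}(d'')$; so assume $n>1$.)

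Next I would group the sum in \eqref{hardest} according to the value $a:=dk\bmod n\in(\bz/n)^*$. The crux is that $k\mapsto dk\bmod n$ maps $K_b$ onto $(\bz/n)^*$ with all fibres of size $|K_b|/\phi(n)$. Write $d'=A\cdot B$ with $A=\prod_{p\mid n}p^{v_p(d')}$, so that $\gcd(A,B)=1$ and $n=\operatorname{rad}(A)$. The primes of $d''=\gcd(d,d')$ all divide $d$, hence are disjoint from those of $n$, so $d''\mid B$; under $(\bz/d')^*\cong(\bz/A)^*\times(\bz/B)^*$ the defining condition $k\equiv b\pmod{d''}$ of $K_b$ lives entirely in the $(\bz/B)^*$-factor, while $dk\bmod n$ depends only on the $(\bz/A)^*$-factor, via the reduction $(\bz/A)^*\twoheadrightarrow(\bz/n)^*$ followed by multiplication by the unit $d$. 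The first of these is uniform because $n=\operatorname{rad}(A)$, and the second is a bijection; this gives the uniformity. Therefore
\[
\sum_{k\in K_b}W_{dk}=\sum_{a\in(\bz/n)^*}\#\{k\in K_b:\ dk\equiv a\}\,W^{(n)}_a=\frac{|K_b|}{\phi(n)}\sum_{a\in(\bz/n)^*}W^{(n)}_a=\frac{|K_b|\,n}{\phi(n)},
\]
the last step being \eqref{1i} applied with $n$ in place of $N$.

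It remains to identify $|K_b|$ and finish the arithmetic. Since $d''\mid d'$, the reduction $\pi^{*d'}_{d''}$ is surjective with fibres of size $\phi(d')/\phi(d'')$, so $|K_b|=\phi(d')/\phi(d'')$, and the displayed formula becomes $\phi(d')\,n/(\phi(d'')\phi(n))$. To see that this equals $d'/d''=\t d$, note that the primes of $d'$ split as the disjoint union of the primes of $d''$ and the primes of $n$: a prime of $d'$ either divides $d$—and then it divides $\gcd(d,d')=d''$—or it does not, and then it divides $n$. Hence $\operatorname{rad}(d')=\operatorname{rad}(d''n)$, so (using $\gcd(d'',n)=1$) $\phi(d')/d'=\phi(d''n)/(d''n)=\phi(d'')\phi(n)/(d''n)$; rearranging yields $\phi(d')\,n/(\phi(d'')\phi(n))=d'/d''$, which is \eqref{hardest}.

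I expect the main obstacle to be the uniformity statement in the second paragraph—showing that, as $k$ runs over $K_b$, the residue $dk\bmod n$ hits every unit of $\bz/n$ equally often—since this is where the divisibility bookkeeping ($n\mid d'$, $\gcd(d,n)=1$, $d''\mid B$, $\gcd(d'',n)=1$, $n=\operatorname{rad}(A)$) is all needed at once. Everything else is a direct unwinding of \eqref{def w} together with Proposition~\ref{wid}.
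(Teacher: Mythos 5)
Your proof is correct, but it takes a genuinely different route from the paper's. The paper first identifies $K_b$ explicitly as the arithmetic progression $\{b+a_id\}$ inside $(\bz/d')^*$ with $0\le a_i<\t d$, reduces each $W_{dk}$ to the modulus $d'$, and then shows that $W_{(b+a_id)^{-1}d}$ equals the \emph{gap} $\Delta a_i$ between consecutive elements of the progression; the identity \eqref{hardest} then falls out because the gaps telescope to $\t d$. (This is an elaboration of the paper's second, ``direct'' proof of \eqref{1i}.) You instead reduce each $W_{dk}$ all the way down to the squarefree modulus $n=\prod_{p\mid N,\,p\nmid d}p$, prove via CRT that $k\mapsto dk\bmod n$ equidistributes $K_b$ over $(\bz/n)^*$, and then invoke the already-established identity \eqref{1i} (for the modulus $n$) as a black box, finishing with the multiplicativity of $\phi$ over the disjoint prime supports of $d''$ and $n$. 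I checked the delicate points: the reduction $W_{dk}=W^{(n)}_{dk\bmod n}$ is valid because every prime of $N$ dividing $d$ automatically misses $mdk-1$; the fibre-size claim holds because $d''\mid B$ and $n\mid A$ place the constraint defining $K_b$ and the quantity $dk\bmod n$ in complementary CRT factors, and because reduction of units $(\bz/A)^*\to(\bz/n)^*$ is surjective with equal fibres; and the final arithmetic $\phi(d')n/(\phi(d'')\phi(n))=d'/d''$ is exactly the statement $\mathrm{rad}(d')=\mathrm{rad}(d''n)$ with $\gcd(d'',n)=1$. Your $n=1$ edge case is also handled correctly. The trade-off: the paper's argument is self-contained and yields the finer information that each individual $W_{dk}$ is a gap $\Delta a_i$ (which its worked example displays), whereas yours is shorter modulo \eqref{1i} but only computes the sum, at the cost of the CRT equidistribution lemma.
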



Now to describe the boundary arcs of our fundamental domains, we consider the ideal geodesic triangle $\ol D$, the closure of $D$ in \er{first D} in the upper half plane $\bh$. We introduce the notation for its edges:
\begin{align*}
L &= \{z\in \bh\,|\, \on{Re}x=1/2,\ |z|\geq 1\},\\
R &= \{z\in \bh\,|\, \on{Re}x=-1/2,\ |z|\geq 1\},\\
B &= \{z\in \bh\,|\, |z|=1, |\on{Re}z|\leq {1}/{2}\},
\end{align*}
called the left, the right (from the viewpoint of the cusp $\infty$), and the base. 
Note that
\beq\label{idf0}
TR=L,\quad SB=B.
\eeq
Also, $L, R$ and their images under elements of $\Gamma(1)$ are all connected to cusps, while $B$ and its images are not.

In this part, we use the symmetric choice \er{sym chc} of residue classes, in agreement with \cite{NP}. 

\begin{proposition}\label{arcs} Let $N>1$. 
The boundary of the connected fundamental domain given by \er{list} has the following arcs: 
\begin{enumerate}
\item $ST^{N_2}L,\ ST^{-N_1}R$,
\item for $\gcd(i, N)=1$, $ST^i B$,
\item for $\gcd(j, N)>1$, 
\begin{enumerate}
\item $ST^jSR$,
\item $ST^jST^{M_j}L$,
\item $ST^jST^{m}B$, with $1\leq m\leq M_j$.  
\end{enumerate}
\end{enumerate}
\end{proposition}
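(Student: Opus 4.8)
The plan is to recall that the coset representatives in $\Theta$ from \er{list} each carry a copy of the standard triangle $\ol D$, and that the boundary of the connected fundamental domain consists exactly of those edges $gL$, $gR$, $gB$ (for $g$ ranging over $\Theta$) which are \emph{not} shared with a neighbouring triangle $hL$, $hR$, $hB$ for another $h\in\Theta$. Since $\ol D$ has the three edges $L,R,B$, and the elementary gluings \er{idf0} say $TR=L$ and $SB=B$, two triangles $g\ol D$ and $h\ol D$ share an edge precisely when $h\in\{gT, gT^{-1}, gS\}$ (the edge being $gL=gT\,R$, $gR=gT^{-1}L$, or $gB=gS\,B$ respectively). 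So the task reduces to the bookkeeping question: for each $g\in\Theta$ and each of its three edge-translates, decide whether the corresponding neighbour $gT$, $gT^{-1}$, or $gS$ again lies in $\Theta$, i.e.\ represents the \emph{same} coset in $\Gamma_0(N)\backslash\Gamma(1)$ as some element of $\Theta$.

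Concretely I would organize the check by the two families in \er{list}. First the elements $g=ST^i$ with $i\in A=\{-N_1,\dots,N_2\}$: the neighbour across $gL=ST^iT R=ST^{i+1}R$ is $ST^{i+1}$, which is again in $\Theta$ unless $i=N_2$ (its would-be successor $ST^{N_2+1}$ is $\Gamma_0(N)$-equivalent to nothing new in the list because $N_2+1\equiv -N_1\bmod N$ and that coset is already present — so the edge is genuinely identified and therefore \emph{not} on the boundary); hence only $ST^{N_2}L$ survives, and symmetrically only $ST^{-N_1}R$. The base edge $ST^iB$ has neighbour $ST^iSB=ST^iS$: when $\gcd(i,N)=1$ this element does \emph{not} appear in $\Theta$ (the second family requires $\gcd(j,N)>1$, and $ST^iS$ is not of the form $ST^k$ either since it equals $\left(\begin{smallmatrix}-1&0\\ i&-1\end{smallmatrix}\right)$), so $ST^iB$ is a boundary arc — giving item (2); when $\gcd(i,N)>1$ the neighbour $ST^iS=ST^iST^0$ \emph{is} in $\Theta$ (the $m=0$ term), so that base edge is internal, which is why $m$ starts at $1$ in (3c). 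Second the elements $g=ST^jST^m$ with $\gcd(j,N)>1$, $0\le m\le M_j$: across $gB=ST^jST^mSB$ the neighbour is $ST^jST^mS=ST^jST^{m-1}\cdot(T^{-1}ST^{-1}S^{-1})$... more simply, one computes $ST^mS$ and checks $gS=ST^jST^mS$ is $\Gamma_0(N)$-equivalent to $ST^jST^{m-1}$ (true for $m\ge 1$) or, for $m=0$, to $ST^j$ (true), so the base edges are all internal — none contribute. Across $gL=ST^jST^{m+1}R$ the neighbour $ST^jST^{m+1}$ is in $\Theta$ iff $m+1\le M_j$, so only $m=M_j$ gives a boundary arc $ST^jST^{M_j}L$, item (3b). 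Across $gR=ST^jST^{m-1}L$ the neighbour $ST^jST^{m-1}$ is in $\Theta$ iff $m\ge 1$; the exceptional case $m=0$ leaves $ST^jSR$ as a boundary arc, item (3a).

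It remains to confirm that these are \emph{all} the boundary arcs and that the listed ones are genuinely on the boundary, i.e.\ that in each "exceptional" case above the would-be neighbour really fails to be $\Gamma_0(N)$-equivalent to any element of $\Theta$ (not merely fails to be literally in $\Theta$). For this I would use the fact, underlying the main result of \cite{NP}, that $\Theta$ is a complete and irredundant set of coset representatives, together with the explicit matrix forms: e.g.\ $ST^{N_2+1}$ represents the same coset as $ST^{-N_1}$ (both send $\infty$ to $0$ and the coset of $ST^k$ depends only on $k\bmod N$), so the edge $ST^{N_2}L=ST^{N_2+1}R$ is glued to $ST^{-N_1}R$ rather than bounding — wait, that would make it internal; the correct reading is that the arc $ST^{N_2}L$ \emph{is} a boundary arc of the domain as a subset of $\bh$, and the gluing identifies it \emph{with} the arc $ST^{-N_1}R$, so both appear in the list (1) as the two arcs of a single gluing pair. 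So the main obstacle, and the step requiring genuine care rather than routine computation, is exactly this: distinguishing "edge shared with another triangle \emph{of the domain}" (internal, omitted) from "edge whose $\Gamma(1)$-neighbour triangle lies outside the domain but is $\Gamma_0(N)$-equivalent to a domain triangle" (boundary arc, listed, and its gluing partner also listed). Handling this cleanly amounts to a careful case analysis of when $gS$, $gT$, $gT^{-1}$ for $g\in\Theta$ land back in the coset set represented by $\Theta$, which is precisely the content of the case-by-case argument sketched above; the function $W$ (equivalently $M$) enters only through the range $0\le m\le M_j$ that cuts off the second family.
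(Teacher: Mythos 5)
Your overall strategy is the same as the paper's: the boundary arcs are exactly those edges $gL$, $gR$, $gB$ (for $g\in\Theta$) whose literal neighbouring tile ($gT$, $gT^{-1}$, $gS$ respectively, up to sign) is not itself an element of $\Theta$, and you then run through the two families of representatives in \er{list}. Your treatment of items (1), (2), (3a) and (3b) is correct, including the important clarification at the end that an edge is internal only when the neighbouring \emph{tile} belongs to the domain, not merely when the neighbouring triangle is $\Gamma_0(N)$-equivalent to one.

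However, your handling of the base edges of the second family is wrong, and it contradicts item (3c) of the very statement you are proving. You assert that $ST^jST^mS$ is $\Gamma_0(N)$-equivalent to $ST^jST^{m-1}$ for $m\ge 1$ and conclude that ``the base edges are all internal --- none contribute.'' Neither half of this is right. First, the equivalence fails: using the row map \er{the map}, $R(ST^jST^mS)=(jm-1:-j)$ while $R(ST^jST^{m-1})=(j:j(m-1)-1)$, and these already disagree for $N=12$, $j=3$, $m=1$, where they are $(2:-3)$ and $(3:-1)$; indeed $ST^3STB$ appears as a boundary arc in the paper's table for $\Gamma_0(12)$. Second, even if the cosets did agree, that would not make the edge internal by your own criterion --- internality requires the literal tile $ST^jST^mS\,\ol D$ to be one of the tiles $g\ol D$ with $g\in\Theta$, and the matrix computation $ST^jST^mS=\left(\begin{smallmatrix}-m & 1\\ jm-1 & -j\end{smallmatrix}\right)$ shows this is never $\pm ST^i$ nor $\pm ST^{j'}ST^{m'}$ with $m'\ge 0$ once $m\ge 1$. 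So for $1\le m\le M_j$ the arcs $ST^jST^{m}B$ are genuine boundary arcs, exactly as (3c) claims; only the $m=0$ base edge $ST^jSB=ST^jB$ is internal, being shared with the tile $ST^j\ol D$. As written, your argument establishes a different (and false) boundary list, so this paragraph must be replaced by the computation above.
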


We use the natural terminology that two boundary arcs $C_1$ and $C_2$ are equivalent, written as  
\beq\label{ceq}
C_1\sim C_2 \text{ if }\exists g\in \Gamma_0(N) \text{ such that }C_1 = gC_2.
\eeq 

\begin{theorem}\label{gluing}
We have the following gluing patterns for the boundary arcs of the connected fundamental domain for $\Gamma_0(N)$. 
\begin{enumerate}
\item $ST^{N_2}L \sim ST^{-N_1}R$.
\item For $\gcd(i, N)=1$, $ST^i B \sim ST^{\wt{-i^{-1}}} B$.
\item For $\gcd(j, N)>1$, 
\begin{enumerate}
\item $ST^jST^{M_j}L\sim ST^{\wt{(1-jW_j)^{-1} j}} SR$, and 
\item for $1\leq m\leq M_j$, $ST^jST^{m}B\sim ST^{j'}ST^{m'}B$ where 
\beq\label{orth}
jj' + (jm-1)(j'm'-1)\equiv 0\mod N.
\eeq
\end{enumerate}
\end{enumerate}
\end{theorem}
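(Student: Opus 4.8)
The plan is to determine, for each boundary arc in Proposition \ref{arcs}, the arc to which it is glued, by one uniform mechanism, and to read off the index formulas from a single matrix computation in each case. Since the side-pairing of the connected fundamental domain $\Delta=\bigcup_{g\in\Theta}g\ol D$ is an involution on its finitely many boundary arcs, it is enough to treat the arcs of type $L$ and of type $B$, namely $ST^{N_2}L$ and $ST^jST^{M_j}L$ (this handles parts (1) and (3a)) and $ST^iB$ and $ST^jST^mB$ (parts (2) and (3b)); the $R$-type arcs are then recovered as the partners of the $L$-type ones. The mechanism is this: for a boundary arc $gE$ with $g\in\Theta$, the ideal triangle in $\bh$ adjacent to $g\ol D$ along $gE$ is $gT\ol D$ when $E=L$ (because $TR=L$ in \er{idf0}) and $gS\ol D$ when $E=B$ (because $SB=B$). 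Letting $g'\in\Theta$ be the coset representative of $\Gamma_0(N)gT$, respectively of $\Gamma_0(N)gS$, and setting $\gamma:=gT(g')^{-1}$, respectively $\gamma:=gS(g')^{-1}$ --- which lies in $\Gamma_0(N)$ and has determinant $1$ by construction --- one gets $gL=gTR=\gamma(g'R)$, respectively $gB=gSB=\gamma(g'B)$, so that the partner of $gE$ is $g'R$, respectively $g'B$.

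With this, the theorem reduces to four coset identifications, each found by computing the bottom row (and reducing it modulo $N$ up to units, via the standard bijection $\Gamma_0(N)\backslash\Gamma(1)\cong P^1(\bz/N)$) of the relevant $gT$ or $gS$. For (1): $gT=ST^{N_2}T=ST^{N_2+1}$ has bottom row $(1,N_2+1)$, proportional to $(1,-N_1)$ since $N_1+N_2+1=N$, so $g'=ST^{-N_1}$ and a short calculation gives $\gamma=\left(\begin{smallmatrix}1&0\\-N&1\end{smallmatrix}\right)$. For (2), with $\gcd(i,N)=1$: $gS=ST^iS=\left(\begin{smallmatrix}-1&0\\ i&-1\end{smallmatrix}\right)$ has bottom row $(i,-1)\sim(1,-i^{-1})$, so $g'=ST^{\wt{-i^{-1}}}$, which is again of type (2) because $-i^{-1}$ is a unit. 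For (3a), with $\gcd(j,N)>1$: $gT=ST^jST^{M_j}T=ST^jST^{W_j}$ has bottom row $(j,jW_j-1)$, and by the very definition \er{def w} of $W_j$ the entry $jW_j-1$ lies in $(\bz/N)^*$, so this row is proportional to $\big((1-jW_j)^{-1}j,-1\big)$; since $(1-jW_j)^{-1}$ is a unit the resulting index has the same gcd with $N$ as $j$, hence $g'=ST^{\wt{(1-jW_j)^{-1}j}}S=ST^{\wt{(1-jW_j)^{-1}j}}ST^{0}$ lies in $\Theta$ and $g'R$ is a genuine type-(3a) arc. For (3b), with $\gcd(j,N)>1$ and $1\le m\le M_j$: writing the representative of $\Gamma_0(N)ST^jST^mS$ as $ST^{j'}ST^{m'}$, the gluing element is $\gamma=ST^jST^mS\,(ST^{j'}ST^{m'})^{-1}$, whose lower-left entry is $(jm-1)(j'm'-1)+jj'$; the requirement $\gamma\in\Gamma_0(N)$, i.e.\ that this entry vanish modulo $N$, is exactly \er{orth}.

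The step I expect to cost the most effort is checking that in (3a) and (3b) the partner is genuinely a boundary arc of the asserted type and not an interior edge of $\Delta$. The decisive input is that, by \er{def w}--\er{def m}, for $1\le m\le M_j$ the element $mj-1$ is \emph{not} a unit modulo $N$ --- indeed $W_j=M_j+1$ is the least $m$ for which $mj-1$ becomes a unit. Since $jm-1$ is the lower-left entry of $ST^jST^mS$, its bottom row cannot be proportional modulo $N$ to the bottom row of any $ST^{i'}$ (whose first entry is a unit), so the representative $ST^{j'}ST^{m'}$ is of the second kind; moreover $m'\ne0$, since $m'=0$ would yield $\Gamma_0(N)ST^jST^mS=\Gamma_0(N)ST^{j'}S$, hence $\Gamma_0(N)ST^jST^m=\Gamma_0(N)ST^{j'}$, which is impossible because distinct elements of $\Theta$ represent distinct cosets and $ST^jST^m\ne\pm ST^{j'}$. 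Thus $1\le m'\le M_{j'}$ with $\gcd(j',N)>1$, so $ST^{j'}ST^{m'}B$ is an arc of type (3c); the analogous (easier) bookkeeping for (3a) was noted above. Once the four computations are in place, the involution property of the side-pairing shows that they exhaust the gluing pattern, which completes the proof.
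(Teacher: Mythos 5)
Your proposal is correct and follows essentially the same route as the paper: identify the triangle adjacent across each $L$-type arc as $gT\ol D$ (via $TR=L$) and across each $B$-type arc as $gS\ol D$ (via $SB=B$), then locate the coset of $gT$ or $gS$ by its bottom row in $P^1(\bz/N)$, which yields exactly the index formulas and the condition \eqref{orth}. The only difference is presentational --- the paper packages the adjacency mechanism as Proposition \ref{intro st} and the row computations as \eqref{row rep}, while you also add a (correct, and welcome) check that the partner of a type-(3c) arc is again of type (3c), a point the paper defers to a remark.
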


The boundary arcs of the fundamental domain for $\Gamma_0(12)$ in Figure \ref{fig 1}, besides the two vertical rays $\on{Re}z=\pm 1/2$ which are naturally identified by $T\in \Gamma_0(12)$, are listed below with equivalent pairs stacked, as an example of our Proposition \ref{arcs} and Theorem \ref{gluing}. 
\smallskip
\begin{center}
\begin{tabular}{|l|c|c|c|c|c|}
\hline
Arc & $ST^2SR$ &$ST^3STB$& $ST^3STL$& $ST^3SR$& $ST^4STB$  \\
\hline
Pair & $ST^{-2}STL$& $ST^{-2}STB$& $ST^{-3}SR$& $ST^{-3}STL$& $ST^{-3}STB$\\
\hline
\hline
Arc & $ST^4STL$ &$ST^4SR$& $ST^5B$& $ST^6SL$& $ST^6L$  \\
\hline
Pair & $ST^{-4}SR$& $ST^{-4}SL$& $ST^{-5}B$& $ST^6SR$& $ST^{-5}R$\\
\hline
\end{tabular}
\end{center}
\smallskip
The gluing pattern in this case turns out to be particularly simple, and this means that the corresponding modular curve $X_0(12)$ has genus 0 by standard topology knowledge. This is consistent with the general genus formula \cite{DS}*{Theorem 3.1.1}. 

The paper is organized as follows. In Section 2, we further study the function $W$, and 
prove Proposition \ref{wid}. 
In Section 4, we prove Theorem \ref{wdj} about the cusps. These two proofs are combinatorial and very concrete. 
In Section 5, we prove Theorem \ref{gluing} about the gluing patterns of the boundary arcs. 

\section{The function $W$}

First, we give a more concrete formula for the function $W$ \er{def w}. Let $N=p_1^{r_1}\dots p_t^{r_t}$ be its prime decomposition. 
Since $p_i\,|\,N$, 
let $\pi^N_{p_i}: \bz/N\to \bz/p_i$ be the natural projection.

\begin{proposition}\label{concrete}  Let $j\in \bz/N$, and  
$j_i= \pi^N_{p_i}(j)$ for $1\leq i\leq t$ as above. 
For an index $i$, if $j_i=0$, then disregard this index. For the others, let $\ell_i$ be the integer between 1 and $p_i-1$ representing $j_i^{-1}\in (\bz/p_i)^*$. 
Then 
$$
W_j = \min \bigg(\bn \big\backslash \bigcup_{p_i\nmid  j} \big(\ell_i + p_i\bz_{\geq 0}\big)\bigg).
$$
\end{proposition}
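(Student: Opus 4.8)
\smallskip
\noindent\textbf{Proof proposal.} The plan is to decode the definition \eqref{def w} of $W_j$ one prime at a time. By definition $W_j$ is the least $m\in\bn$ with $mj-1\in(\bz/N)^*$, and $mj-1$ is a unit modulo $N$ exactly when $p_i\nmid mj-1$ for every prime $p_i\mid N$, i.e.\ when $mj\not\equiv 1\pmod{p_i}$ for all $i$. So the first step is to understand, for a fixed index $i$, the set of $m$ for which the congruence $mj\equiv 1\pmod{p_i}$ fails.

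Next I would split into two cases according to whether $p_i\mid j$. If $p_i\mid j$, then $mj\equiv 0\pmod{p_i}$ for every $m$, which is never $\equiv 1$ since $p_i\geq 2$; hence the constraint at $p_i$ is vacuous and this index may be dropped, which is exactly the ``disregard this index'' clause. If $p_i\nmid j$, then $j_i\in(\bz/p_i)^*$ and $mj\equiv 1\pmod{p_i}$ is equivalent to $m\equiv j_i^{-1}\pmod{p_i}$, i.e.\ $m\equiv\ell_i\pmod{p_i}$. Because $\ell_i$ was normalized to lie in $\{1,\dots,p_i-1\}$, the positive integers in the residue class $\ell_i\bmod p_i$ are precisely those of the form $\ell_i+p_ik$ with $k\geq 0$, that is, the elements of $\ell_i+p_i\bz_{\geq 0}$. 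Hence for $m\in\bn$ the condition imposed by $p_i$ reads $m\notin\ell_i+p_i\bz_{\geq 0}$.

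Intersecting these conditions over all $i$, I would conclude that for $m\in\bn$ one has $mj-1\in(\bz/N)^*$ if and only if $m\in\bn\setminus\bigcup_{p_i\nmid j}(\ell_i+p_i\bz_{\geq 0})$, and then taking the minimum of both sides yields the asserted formula. For completeness one should note that the right-hand set is nonempty so the minimum is attained (equivalently, re-derive that $W_j$ is well defined): since each $p_i\geq 2$ has a residue class other than $\ell_i$, the Chinese Remainder Theorem supplies an $m$ avoiding every $\ell_i\bmod p_i$.

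I do not expect a genuine obstacle in this argument; the only point that requires a little care is the bookkeeping passage from ``$m>0$ and $m\equiv\ell_i\pmod{p_i}$'' to ``$m\in\ell_i+p_i\bz_{\geq 0}$'', which works precisely because of the chosen range $1\leq\ell_i\leq p_i-1$.
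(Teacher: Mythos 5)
Your proposal is correct and follows essentially the same route as the paper's proof: check divisibility by each prime $p_i\mid N$ separately, note the condition is vacuous when $p_i\mid j$, and otherwise translate $p_i\mid mj-1$ into $m\equiv\ell_i\pmod{p_i}$, i.e.\ $m\in\ell_i+p_i\bz_{\geq 0}$. Your extra remark on nonemptiness of the complement (well-definedness of the minimum) is a harmless addition the paper leaves implicit.
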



\begin{proof} An element $x\in \bz/N$ is a unit if $p_i\nmid x$ for $1\leq i\leq t$. 

Now if $p_i| j$, then any $m\in \bn$ would make $p_i\nmid mj-1$. 

If $\pi^N_{p_i}(j) \neq 0$, then we have its inverse $\ell_i$ as above. Then 
$$
p_i|(mj-1)\iff m\equiv \ell_i \mod p_i.
$$
So $m$ should avoid $\ell_i + p_i\bz_{\geq 0}$ for $mj-1$ to be not divisible by $p_i$. Putting these together, we get our result. 
\end{proof}

\begin{remark}
Therefore, we can design examples where $W_j$ for some $j$ is as big as we wish. For example, in $\bz/6$, $W_5 = 4$ as we need to avoid 
$$
\{1+2k, 2+3k\,|\, k\geq 0\}.
$$
\end{remark}

Now we go to the proof of the identities in Proposition \ref{wid}. Our proof depends on our previous work \cite{NP}, and let's first introduce the projective line
\beq\label{P1}
P^1(\bz/N)=\{(a,b)\,|\,a,b\in \bz/N\bz,\ \gcd(a,b,N)=1\}/\sim,
\eeq
where $(a,b)\sim (a',b')$ if there is $u\in (\bz/N\bz)^*$, such that $a=ua', b=ub'$. 
We write the class of $(a, b)$ by $(a:b)$.

\begin{proof}[Proof of Proposition \ref{wid}]
In \cite{NP}
, we proved that each class $(a:b)\in P^1(\bz/N)$ \er{P1} has a preferred element. That is, we have a bijection of sets
\beq\label{pref}
\on{pr}:P^1(\bz/N)\to \Phi := \{(j, jm-1)\,|\, j\in \bz/N,\ 0\leq m\leq M_j\},
\eeq
where $\on{pr}$ stands for preferred.



Since the cardinality of $P^1(\bz/N)$ is well known to be $\psi(N)$, we see that \er{total} holds by \er{def m} and \er{pref}. 

Note that the affine part 
\beq\label{aff}
\ba^1 :=\{(a:b)\,|\,a, b\in \zn,\ \gcd(a,N)=1\}=
\{(1:b)\,|\,b\in \bz/N\bz\}\subset P^1(\bz/N\bz)
\eeq
has cardinality $N$, and its image under the map in \er{pref} is 
$$
\on{pr}(\ba^1) = \{(j, mj-1)\,|\, j\in (\bz/N)^*, 0\leq m\leq M_j\}. 
$$
This establishes \er{1i}, since the cardinality of $\ba^1$ is clearly $N$. 

Then \er{jm} is the difference of these two. 

Now we present another direct proof of \er{1i}, whose idea we will continue to use in a harder situation. 

For definiteness, we choose the residue class representatives of $\bz/N$ to be $$\{0,1,\dots,N-1\}.$$ 
We list the integer representatives of the set $(\bz/N)^*$ of units in order as 
$$
u_1<u_2<\dots<u_{n},
$$
where $n=\phi(N)$. 

For $1\leq i\leq n$, we define 
$$
\Delta u_i = \begin{cases}
u_i - u_{i-1}, & 2\leq i\leq n,\\
u_1-(u_{n}-N)=N+u_1-u_n, & i=1.
\end{cases}
$$
We claim that 
$$
W_{u_i^{-1}} = \Delta u_i, \quad 1\leq i\leq n.
$$

The reason is that by our setup, 
$$
u_i-m\quad \text{is }
\begin{cases}
\text{not unit } & \text{if }1\leq m<\Delta u_i,\\
\text{a unit} & \text{if }m=\Delta u_i.
\end{cases}
$$
Multiplying by $u^{-1}_i$, we get 
$$
m u_i^{-1}-1 \quad \text{is }
\begin{cases}
\text{not unit } & \text{if }1\leq m<\Delta u_i,\\
\text{a unit} & \text{if }m=\Delta u_i.
\end{cases}
$$
This, by definition \er{def w}, means that 
\beq\label{neat}
W_{u^{-1}_i} = \Delta u_i.
\eeq

Therefore, 
$$
\sum_{j\in (\bz/N)^*} W_j= \sum_{i=1}^n W_{u^{-1}_i} = \sum_{i=1}^n \Delta u_i = N.
$$
\end{proof}

\begin{example}
For example, when $N=30$, we have 
\begin{center}
\begin{tabular}{|c|c|c|c|c|c|c|c|c|}
\hline
$u_i$ & 1 & 7 & 11 & 13 & 17 & 19 & 23 & 29 \\
\hline
$\Delta u_i$ & 2 & 6 & 4 & 2 & 4 & 2 & 4 & 6 \\
\hline
$u^{-1}_i$ & 1 & 13 & 11 & 7 & 23 & 19 & 17 & 29\\
\hline
$W_{u^{-1}_i}$ & 2 & 6 & 4 & 2 & 4 & 2 & 4 & 6 \\
\hline
\end{tabular}
\end{center}
Here the last row is computed by definition \er{def w}. For example, $W_{13}=6$ since $6\cdot 13 -1=77\in (\bz/30)^*$ is the first such instance.  We proved in \er{neat} that the second and the last rows are the same. 
\end{example}



\section{Cusps from the fundamental domains}


Now we go on to prove the identity relating the widths of the cusps of our fundamental domains with the widths of the cusp classes. 

\begin{proof}[Proof of Proposition \ref{wdj}]
First we show 
\beq\label{Kb}
K_b = \left\{\pi_{d'}(b + a_id) \in (\bz/d')^*
\,\Big|\,0\leq a_i<\t d=\frac{d'}{d''},\ 1\leq i\leq \frac{\phi(d')}{\phi(d'')}\right\},
\eeq
where $K_b$ is from \er{kb} with $b$ an integer between 1 and $d''-1$ representing the class in $(\bz/d'')^*$.
We first show that all the elements are distinct in the set on the RHS. For $ad\equiv a'd\mod d'$, we need $d'\,|\,(a-a')d$, so
$$
\frac{d'}{d''}\,\Big|\, (a-a')\frac{d}{d''}\implies \frac{d'}{d''}\,\Big|\, (a-a'),
$$
by $d''=\gcd(d, d')$. Therefore, for our range of $a$, the elements are distinct. Also the number of such $a_i$ is $\frac{\phi(d')}{\phi(d'')}$
by considering the homomorphism $\pi^{*d'}_{d''}: (\bz/d')^*\to (\bz/d'')^*$. 

Clearly the RHS of \er{Kb} is contained in $K_b$. Now since $d''=\gcd(d, d')$, we also see that all elements in $K_b$ have the form
$$
b+md'' = b + m(ud + vd') \equiv b+ mud\mod d',
$$
for some integers $m$, $u$ and $v$, hence belonging to the RHS. 

Then upon taking inverse, we have 
\beq\label{need inv}
K_{b^{-1}} = (K_b)^{-1}\subset (\bz/d')^*\quad \text{for }b^{-1}\in (\bz/d'')^*.
\eeq 

The following part is a more elaborate version of our direct proof of \er{1i}. We let $n=\frac{\phi(d')}{\phi(d'')}$, and order the $a_i$ in $K_{b}$ \er{Kb} as 
$$
a_1<a_2<\cdots<a_n.
$$
For $1\leq i\leq n$, we define 
$$
\Delta a_i = \begin{cases}
a_i - a_{i-1}, & 2\leq i\leq n,\\
a_1-(a_{n}-\t d)=\t d+a_1-a_n, & i=1.
\end{cases}
$$
We claim that 
$$
W_{(b+a_i d)^{-1}d} = \Delta a_i, \quad 1\leq i\leq n,
$$
where $\pi_{d'}(b+a_i d)\in (\bz/d')^*$, and $(b+a_i d)^{-1}$ is the integer between $1$ and $d'-1$ representing its inverse in $\bz/d'$. 

Note that for all $m\in \bn$, $m(b+a_i d)^{-1}d - 1\in (\bz/d)^*$, so we only need $m(b+a_id)^{-1} d-1\in (\bz/d')^*$ for it to be in $(\bz/N)^*$. Therefore, by \er{def w}, 
\beq\label{reduce}
W_{(b+a_i d)^{-1}d}^N = W_{(b+a_i d)^{-1}d}^{d'}.
\eeq
(Here we are using a superscript to identify the modulus for which $W$ \er{def w} is defined. The default one is $W=W^N$.)

By our setup, 
$$
b+a_i d -md\quad \text{is }
\begin{cases}
\text{not unit }\mod d' & \text{if }1\leq m<\Delta a_i,\\
\text{a unit}\mod d' & \text{if }m=\Delta a_i.
\end{cases}
$$
Multiplying by $(b+a_i d)^{-1}$ we get 
$$
m (b+a_i d)^{-1} d -1 \quad \text{is }
\begin{cases}
\text{not unit } \mod d'& \text{if }1\leq m<\Delta a_i,\\
\text{a unit} \mod d'& \text{if }m=\Delta a_i.
\end{cases}
$$
This, again by definition \er{def w}, means that 
\beq\label{again}
W_{(b+a_i d)^{-1}d}^{d'}= \Delta a_i.
\eeq


Therefore, from \er{need inv}, \er{reduce} and \er{again} we see that 
$$
\sum_{k\in K_{b^{-1}}} W_{kd}= \sum_{i=1}^{n} W_{(b+a_id)^{-1}d} = \sum_{i=1}^{n} \Delta a_i = \t d
$$
This is \er{hardest}, since $b^{-1}$ is arbitrary in $(\bz/d'')^*$. 
\end{proof}

\begin{example} Let $d=21, d'=90$. Then $N=d\cdot d' = 1890$, $d''=\gcd(d, d') = 3$, and $\t d = d'/d'' = 30$. Let us consider $b=1\in (\bz/d'')^*$. So the cusp class in $C_0(N)$ with invariant under $\chi$ \er{bij0} as  
$$(d; b)=(21; 1)\in S_0(N),$$
has width $\t d = 30$.  

The natural cusps \er{kb} from the fundamental domain are 
$$
\frac{1}{j} = \frac{1}{dk}, \quad k \in K_b = (\pi^{*90}_{3})^{-1} (1)=\{1, 7, 13, 19, 31, 37, 43, 49, 61, 67, 73, 79\} ,
$$
with $\phi(90)/\phi(3)=12$ element, where $\pi^{*d'}_{d''} : (\bz/90)^*\to (\bz/3)^*$ is the natural projection. These elements are in our third row below, and we are counting them as $(1+a_id)\mod d'$, with the $a_i$ increasing. 
\begin{center}
\begin{tabular}{|c|c|c|c|c|c|c|c|c|c|c|c|c|}
\hline
$a_i$ & 0 & 2 & 6 & 8 & 10 & 12 & 16 & 18 & 20 & 22 & 26 & 28  \\
\hline
$\Delta a_i$ & 2 & 2 & 4 & 2 & 2 & 2 & 4 & 2 & 2 & 2 & 4 & 2\\
\hline
$1+a_id \mod d'$ & 1 & 43 & 37 & 79 & 31 & 73 & 67 & 19 & 61 & 13 & 7 & 49 \\
\hline
$(1+a_id)^{-1}\mod d'$ & 1 & 67 & 73 & 49 & 61 & 37 & 43 & 19 & 31 & 7 & 13 & 79 \\
\hline
$(1+a_id)^{-1}d\mod d'$ & 21& 57& 3& 39& 21& 57& 3& 39& 21& 57& 3& 39\\
\hline
$W_{(1+a_id)^{-1}d}^{d'}$ & 2 & 2 & 4 & 2 & 2 & 2 & 4 & 2 & 2 & 2 & 4 & 2\\
\hline
\end{tabular}
\end{center}

So in this example, the last row of $W_{(1+a_id)^{-1}d}^{d'}$ can be computed from the second last row
by definition \er{def w}. 
Our result is that 
$$
W_{(1+a_id)^{-1}d}^{d'} = W_{(1+a_id)^{-1}d}^{N} = \Delta a_i,
$$
as in the second row, 
so they add to $\t d=30$. 
\end{example}


\section{Boundary identifications of the fundamental domains}

Identifying the boundary arcs in our connected fundamental domain is relatively straightforward. 

\begin{proof}[Proof of Proposition \ref{arcs}]
This follows from our construction using the set $\Theta$ \er{list} of coset representatives. First, we have the $ST^i D$ connected to one other. At the ends for $N_2$ and $-N_1$, we have case (1). 

For each $ST^i D$ with $(i, N)=1$, we have case (2) of $ST^i B$. 

For each $(j, N)>1$, we further attach $ST^j ST^m$, $0\leq m\leq M_j$. Again, at the two ends of $m=0$ and $m=M_j$, we have cases (3.a) and (3.b). For $1\leq m\leq M_j$, we have the images of the base as in case (3.c). Note that for $m=0$, $ST^jSB = ST^jB$ is identified in our connected fundamental domain already, and not a boundary arc. 

See Figure \ref{fig 1} for an example. 
\end{proof}

\begin{remark}
This proposition contains all the cases of $N>1$, including $N=2, 3, 4$. Also, when $j=0$, $M_j=0$, so we are in cases (3.a) and (3.b), and the two boundary arcs are 
$R$ and $L$. 
\end{remark}

Now, we introduce the following way of identifying the class of an element of $\Gamma(1)$ in the set $\Gamma_0(N)\backslash \Gamma(1)$ of right cosets . From \cite{Crem}*{\S 2.2}, we know that there is a bijection between $\Gamma_0(N)\backslash \Gamma(1)$ and $P^1(\bz/N)$, 
induced by the following map
\begin{equation}\label{the map}
R: \Gamma(1)\to P^1(\bz/N);\ \begin{pmatrix}a & b \\ c & d  \end{pmatrix}\mapsto (c:d),
\end{equation} 
where we choose the notation $R$ for row.

Therefore, for $\gamma_1, \gamma_2\in \Gamma(1)$, 
\begin{equation}\label{easy way}
[\gamma_1]=[\gamma_2]\in\Gamma_0(N)\backslash \Gamma(1)
\iff 
 R(\gamma_1)=R(\gamma_2)\in P^1(\bz/N).
\end{equation}

Then direct calculation gives, for $x, y\in \bz$, 
\begin{equation}\label{row rep}
\begin{split}
R(ST^x) &= (1:x),\\
R(ST^x S) &= (x: -1),\\
R( ST^x ST^y) &= (x: xy-1),\\
R(ST^x ST^y S) &= (xy-1: -x).
\end{split}
\end{equation}
Note that the right hand sides are guaranteed to be in $P^1(\bz/N)$. 

\begin{proposition}\label{intro st}
Let $\gamma_1,  \gamma_2\in \Gamma_1$ have different classes in $\Gamma_0(N)\backslash\Gamma(1)$. Using notation \er{ceq}, 
we have 
\begin{align}
\gamma_1 L\sim \gamma_2 R &\iff [\gamma_1T]=[\gamma_2]\in \Gamma_0(N)\backslash\Gamma(1),\text{ i.e. }\gamma_1T\gamma_2^{-1}\in \Gamma_0(N),\label{lr}\\
\gamma_1 B \sim \gamma_2 B&\iff [\gamma_1S]=[\gamma_2]\in \Gamma_0(N)\backslash\Gamma(1),\text{ i.e. }\gamma_1S\gamma_2^{-1}\in \Gamma_0(N).\label{bb}
\end{align}
\end{proposition}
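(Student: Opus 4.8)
The plan is to reduce the geometric equivalence of boundary arcs to the combinatorics of cosets via the two basic relations $TR=L$ and $SB=B$ in \er{idf0}. Recall that for $g\in\Gamma(1)$ and an arc $C$ on $\partial\ol D$, the arc $gC$ depends only on $g$ as an actual matrix, but the statement $C_1\sim C_2$ from \er{ceq} asks for an element of $\Gamma_0(N)$ carrying one to the other.

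First I would prove \er{lr}. Using $L=TR$, we have $\gamma_1L=\gamma_1TR$, so $\gamma_1L\sim\gamma_2R$ exactly when there is $h\in\Gamma_0(N)$ with $\gamma_1TR=h\gamma_2R$. For the backward direction this is immediate: if $h:=\gamma_1T\gamma_2^{-1}\in\Gamma_0(N)$ then $h\gamma_2R=\gamma_1TR=\gamma_1L$. For the forward direction one needs that $\gamma_1TR=h\gamma_2R$ forces $h\gamma_2=\gamma_1T$ (not merely agreement after applying to $R$); here the key point is that $R$ is a geodesic arc running from the cusp $\infty$ to the cusp $-1$, and its stabilizer in $\Gamma(1)$ is trivial — an element of $\Gamma(1)$ fixing $R$ setwise would have to fix the ordered pair of ideal endpoints (it cannot swap them, as the only candidate would be an order-two element, and one checks none fixes $R$ pointwise-or-reversing while lying in $\Gamma(1)$), and the pointwise stabilizer of a geodesic in $\mathrm{PSL}_2(\bz)$ is trivial. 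Hence $(h\gamma_2)^{-1}\gamma_1T$ stabilizes $R$, so equals $\pm I$, giving $h\gamma_2=\pm\gamma_1T$, i.e. $[\gamma_1T]=[\gamma_2]$ in $\Gamma_0(N)\backslash\Gamma(1)$, which by \er{easy way} is the stated condition. The final "i.e." is just the definition of right coset.

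Next, \er{bb} is entirely analogous using $SB=B$: $\gamma_1B\sim\gamma_2B$ iff some $h\in\Gamma_0(N)$ has $h\gamma_2B=\gamma_1B$, and since $B$ has trivial setwise stabilizer in $\Gamma(1)$ \emph{modulo} the element $S$ which reverses it — indeed $SB=B$ with orientation reversed, and $S$ is the only nontrivial element of $\Gamma(1)$ preserving $B$ setwise — we get $h\gamma_2\in\{\pm\gamma_1,\pm\gamma_1S\}$. The case $h\gamma_2=\pm\gamma_1$ would give $[\gamma_1]=[\gamma_2]$, contradicting the hypothesis that $\gamma_1,\gamma_2$ have different classes; so $h\gamma_2=\pm\gamma_1S$, i.e. $[\gamma_1S]=[\gamma_2]$. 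Conversely $[\gamma_1S]=[\gamma_2]$ gives $h:=\gamma_1S\gamma_2^{-1}\in\Gamma_0(N)$ with $h\gamma_2B=\gamma_1SB=\gamma_1B$. The hypothesis $[\gamma_1]\neq[\gamma_2]$ is exactly what is needed to rule out the spurious possibility and is harmless in the application, since in Proposition \ref{arcs} the arcs $ST^iB$ attached to distinct triangles lie in distinct cosets by \er{row rep}.

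The main obstacle is the stabilizer computation: one must justify carefully that the only elements of $\Gamma(1)$ preserving the geodesic arc $R$ (resp.\ $B$) setwise are $\pm I$ (resp.\ $\pm I,\pm S$). For $R$, which joins two cusps, this follows from the fact that a nontrivial elliptic/parabolic/hyperbolic classification: an element preserving a geodesic between two distinct points of $\partial\bh$ is either identity or a hyperbolic translation along it or an involution swapping the endpoints; since the endpoints $\infty$ and $-1$ of $R$ are cusps there is no hyperbolic element of $\Gamma(1)$ with those as axis endpoints (cusps are fixed only by parabolics), and an involution swapping $\infty\leftrightarrow-1$ would be conjugate to $S$ but one checks directly it does not lie in $\mathrm{SL}_2(\bz)$ with those fixed endpoints, leaving only $\pm I$. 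For $B$, the endpoints are $\pm e^{i\pi/3}$ type points (the corners of $\ol D$), and $S$ visibly swaps them while fixing $i$; a short direct matrix argument shows $\{\pm I,\pm S\}$ is the full setwise stabilizer. Once these are in hand, the rest is the bookkeeping of signs and the translation via \er{easy way}, which is routine.
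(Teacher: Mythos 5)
Your proposal follows essentially the same route as the paper: use $L=TR$ and $SB=B$ to translate arc equivalence into a coset condition, and pin down the ambiguity by computing the setwise stabilizers of $R$ and $B$ in $\Gamma(1)$ (namely $\pm I$ and $\{\pm I,\pm S\}$), with the hypothesis $[\gamma_1]\neq[\gamma_2]$ ruling out the spurious case for $B$. The one point to correct is your justification of the stabilizer of $R$: the arc $R=\{\on{Re}z=-1/2,\ |z|\geq 1\}$ does not join two cusps (and certainly not $\infty$ and $-1$); it runs from the elliptic point $-\tfrac12+\tfrac{\sqrt3}{2}\sqrt{-1}$ to the cusp $\infty$, so the hyperbolic-axis discussion does not apply. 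The correct (and simpler) argument is that a setwise stabilizer cannot interchange an interior endpoint with an ideal one, hence must fix both $\infty$ (forcing $\pm T^n$) and the elliptic corner (forcing $n=0$), giving $\pm I$; with that repair the proof is complete and matches the paper's.
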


\begin{proof} By $L=TR$ \er{idf0}, we see that $\gamma_1 L\sim \gamma_2 R$ iff $\exists g\in \Gamma_0(N)$ such that 
$$
\gamma_1 T R = g\gamma_2 R.
$$
Since only $\pm Id\in \Gamma(1)$ map $R$ to $R$, this is equivalent to  
$$
\gamma_2^{-1}g^{-1}\gamma_1 T = \pm Id,\text{ or } \gamma_1 T\gamma_2^{-1} = \pm g\in \Gamma_0(N).
$$
This proves \er{lr}, and \er{bb} is similar. Note that only $\pm Id$ and $\pm S$ preserve $B$, so from 
$\gamma_2^{-1} g^{-1} \gamma_1 B = B$, we see that 
$$
 \gamma_2^{-1} g^{-1} \gamma_1 = \pm Id\text{ or } \pm S.
 $$
 Therefore
 $$
 \gamma_1\gamma_2^{-1} = \pm g \text{ or } \gamma_1 S \gamma_2^{-1}  = \pm g.
 $$
 Because $\gamma_1$ and $\gamma_2$ have different classes, we are in the second case. 
\end{proof}

With these preparations done, let's go to the proof of our gluing patterns. 

\begin{proof}[Proof of Theorem \ref{gluing}]  We apply Proposition \ref{intro st}, and \er{easy way}, \er{row rep} throughout this proof. 

For (1), we need to check 
$$
[ST^{N_2}T] = [ST^{-N_1}],
$$
and this follows from 
$$
(N_2+1: 1)=(-N_1:1),\quad\text{by }N_1+N_2+1=N,
$$
from \er{sym chc}. 

For (2), we need to check 
$$
[ST^{i}S] = [ST^{\wt{-i^{-1}}}],
$$
and this follows from 
$$
(i:-1) = (1: -i^{-1}),\quad\text{by }\gcd(i, N)=1.
$$

For (3.a), we need to check 
$$
[ST^jST^{M_j}T] = [ST^{\wt{(1-jW_j)^{-1} j}}  S],
$$
and this follows from 
$$
(j: jW_j-1)  = ((1-jW_j)^{-1} j: -1),\quad\text{by }\gcd(jW_j-1, N)=1, 
$$
from \er{def m} and \er{def w}. 

Now note the elementary fact that 
\beq\label{cross mul}
(a:b)=(c:d)\in P^1(\bz/N)\iff ad = bc\in \bz/N.
\eeq
Only the $\Longleftarrow$ direction needs an explanation, and it can be proved by the existence of $u, v$ such that $au + bv = 1\in \bz/N$ by \er{P1}. We then multiply it by $c$ and $d$ separately, obtaining 
$$
c=(cu+dv)a,\quad d=(cu+dv)b,
$$
with $cu+dv\in (\bz/N)^*$ necessarily. 

Then for (3.b), we need to check 
$$
[ST^jST^{m}S] = [ST^{j'}ST^{m'}],
$$
and this follows from 
$$
(jm-1: -j) = (j': j'm'-1),
$$
which by \er{cross mul} is 
$$
(jm-1)(j'm'-1) = -jj' \in \bz/N,
$$
the assumption \er{orth}. 
\end{proof}



\begin{remark} In the case (3.b) of Theorem \ref{gluing}, we have 
\begin{equation}\label{1comp}
\gcd(j, N)>1,\quad \gcd(jm-1, N)>1,
\end{equation}
by $1\leq m\leq M_j$ and \er{def w}, \er{def m}. The corresponding $j'$ and $m'$ can be found by applying to $(jm-1: -j)$ our algorithm in \cite{NP}, which produces such representatives for all points in the hyperplane $H := P^1(\bz/N)\backslash \ba^1$ (see \er{aff}). Clearly  
$(jm-1: -j)\in H$ by \er{1comp}. 
\end{remark}


\begin{remark} In the case (3.a) of Theorem \ref{gluing}, we can see, by \er{the cusp}, that that cusps of $ST^jST^{M_j}$ and $ST^{\wt{(1-jW_j)^{-1} j}} S$ are
$$
-\frac{1}{j}\quad\text{and}\quad -\frac{1}{\wt{(1-jW_j)^{-1} j}}.
$$
By \er{the class}, their cusp classes are the same, since
$$
\gcd(1-jW_j, N)=1\quad\text{and}\quad \pi_{d''}((1-jW_j)^{-1}) = 1,
$$
where $d=\gcd(j, N), d''=\gcd(d, N/d)$ so $d''|j$. 

Therefore, the gluing pattern (3.a) brings together by $\Gamma_0(N)$ the cusps of the same class, as expected. This 
is compatible with the cusp width identity in Theorem \ref{wdj}. 
\end{remark}

\begin{bibdiv}
\begin{biblist}

\bib{Crem}{book}{
   author={Cremona, J. E.},
   title={Algorithms for modular elliptic curves},
   edition={2},
   publisher={Cambridge University Press, Cambridge},
   date={1997},
   pages={vi+376},
   isbn={0-521-59820-6},
}

\bib{DS}{book}{
   author={Diamond, Fred},
   author={Shurman, Jerry},
   title={A first course in modular forms},
   series={Graduate Texts in Mathematics},
   volume={228},
   publisher={Springer-Verlag, New York},
   date={2005},
   pages={xvi+436},
   isbn={0-387-23229-X},
}

\bib{NP}{article}{
   author={Nie, Zhaohu},
   author={Parent, C. Xavier},
   title={Connected fundamental domains for congruence subgroups},
   journal={arXiv:2411.17119 [math.NT]},
   date={2024},
}

\bib{Shimura}{book}{
   author={Shimura, Goro},
   title={Introduction to the arithmetic theory of automorphic functions},
   series={Kan\^o{} Memorial Lectures},
   volume={No. 1},
   note={Publications of the Mathematical Society of Japan, No. 11},
   publisher={Iwanami Shoten Publishers, Tokyo; Princeton University Press,
   Princeton, NJ},
   date={1971},
   pages={xiv+267},
}

\bib{S}{book}{
   author={Stein, William},
   title={Modular forms, a computational approach},
   series={Graduate Studies in Mathematics},
   volume={79},
   note={With an appendix by Paul E. Gunnells},
   publisher={American Mathematical Society, Providence, RI},
   date={2007},
   pages={xvi+268},
   isbn={978-0-8218-3960-7},
   isbn={0-8218-3960-8},
}

\end{biblist}
\end{bibdiv}

\medskip 

\end{document}